\title{On The Mackey Formula for Connected Centre Groups}
\author{Jay Taylor}
\address{Department of Mathematics, University of Arizona, 617 N. Santa Rita Ave., Tucson AZ 85721, United States.}
\email{jaytaylor@math.arizona.edu}
\keywords{Finite reductive groups, Mackey formula.}
\newcommand{\Class}{\ensuremath{\mathrm{Class}}}
\begin{document}
\begin{abstract}
Let $\bG$ be a connected reductive algebraic group over $\overline{\mathbb{F}}_p$ and let $F : \bG \to \bG$ be a Frobenius endomorphism endowing $\bG$ with an $\mathbb{F}_q$-rational structure. Bonnaf\'e--Michel have shown that the Mackey formula for Deligne--Lusztig induction and restriction holds for the pair $(\bG,F)$ except in the case where $q = 2$ and $\bG$ has a quasi-simple component of type $\E_6$, $\E_7$, or $\E_8$. Using their techniques we show that if $q = 2$ and $Z(\bG)$ is connected then the Mackey formula holds unless $\bG$ has a quasi-simple component of type $\E_8$. This establishes the Mackey formula, for instance, in the case where $(\bG,F)$ is of type $\E_7(2)$. Using this, together with work of Bonnaf\'e--Michel, we can conclude that the Mackey formula holds on the space of unipotently supported class functions if $Z(\bG)$ is connected.
\end{abstract}

\section{Introduction}\label{sec:intro}
\begin{pa}\label{pa:setup}
Let $\bG$ be a connected reductive algebraic group over an algebraic closure $\overline{\mathbb{F}}_p$ of the finite field $\mathbb{F}_p$ of prime cardinality $p$. Moreover, let $F : \bG \to \bG$ be a Frobenius endomorphism endowing $\bG$ with an $\mathbb{F}_q$-rational structure, where $\mathbb{F}_q \subseteq \overline{\mathbb{F}}_p$ is the finite field of cardinality $q$. We assume fixed a prime $\ell \neq p$ and an algebraic closure $\Ql$ of the field of $\ell$-adic numbers. If $\Gamma$ is a finite group then we denote by $\Class(\Gamma)$ the functions $f : \Gamma \to \Ql$ invariant under $\Gamma$-conjugation.
\end{pa}

\begin{pa}
If $\bP \leqslant \bG$ is a parabolic subgroup of $\bG$ with $F$-stable Levi complement $\bL$ then Deligne--Lusztig have defined a pair of linear maps $R_{\bL \subset \bP}^{\bG} : \Class(\bL^F) \to \Class(\bG^F)$ and ${}^*R_{\bL \subset \bP}^{\bG} : \Class(\bG^F) \to \Class(\bL^F)$ known as Deligne--Lusztig induction and restriction. The Mackey formula, which is an analogue of the usual Mackey formula from finite groups, is then defined to be the following equality
\begin{equation}\label{eq:mackey}
{}^*R_{\bL \subset \bP}^{\bG}\circ R_{\bM \subset \bQ}^{\bG} = \sum_{g \in \bL^F \setminus \mathcal{S}_{\bG}(\bL,\bM)^F/\bM^F} R_{\bL \cap {}^g\bM \subset \bL \cap {}^g\bQ}^{\bL}\circ {}^*R_{\bL\cap {}^g\bM \subset \bP \cap {}^g\bM}^{{}^g\bM} \circ (\ad g)_{\bM^F} \tag{$\mathcal{M}_{\bG,F,\bL,\bP,\bM,\bQ}$}
\end{equation}
of linear maps $\Class(\bM^F) \to \Class(\bL^F)$, where $\bQ \leqslant \bG$ is a parabolic subgroup with $F$-stable Levi complement $\bM \leqslant \bQ$. Here
\begin{equation*}
\mathcal{S}_{\bG}(\bL,\bM) = \{g \in \bG \mid \bL\cap{}^g\bM\text{ contains a maximal torus of }\bG\}
\end{equation*}
and $(\ad g)_{\bM^F}$ is the linear map $\Class(\bM^F) \to \Class({}^g\bM^F)$ induced by the isomorphism ${}^g\bM^F \to \bM^F$ obtained by restricting the inner automorphism $(\ad g)_{\bG^F}$ of $\bG^F$ defined by conjugation with $g$.
\end{pa}

\begin{pa}
The Mackey formula is a fundamental tool in the representation theory of finite reductive groups. It's importance to ordinary representation theory is made abundantly clear in the book of Digne--Michel \cite{digne-michel:1991:representations-of-finite-groups-of-lie-type}. However it also plays a prominent role in modular representation theory via $e$-Harish-Chandra theory. The formula was first proposed by Deligne in the case where $\bP$ and $\bQ$ are both $F$-stable; a proof of this case appears in \cite[2.5]{lusztig-spaltenstein:1979:induced-unipotent-classes}. Deligne--Lusztig were also able to establish the formula when either $\bL$ or $\bM$ is a maximal torus, see \cite[Theorem 7]{deligne-lusztig:1983:duality-for-representations-II} and \cite[11.13]{digne-michel:1991:representations-of-finite-groups-of-lie-type}. We note that a consequence of the Mackey formula, namely the inner product formula for Deligne--Lusztig characters, had been shown to hold in earlier work of Deligne--Lusztig, see \cite[6.8]{deligne-lusztig:1976:representations-of-reductive-groups}.
\end{pa}

\begin{pa}
A possible approach to proving the Mackey formula is suggested by the early work of Deligne--Lusztig, see the proof of \cite[6.8]{deligne-lusztig:1976:representations-of-reductive-groups}. Here the idea is to argue by induction on $\dim \bG$. In a series of articles \cite{bonnafe:1998:formule-de-mackey,bonnafe:2000:mackey-formula-in-type-A,bonnafe:2003:mackey-formula-in-type-A-corrigenda} Bonnaf\'e made extensive progress on the Mackey formula, specifically establishing criteria that a minimal counterexample must satisfy. In fact, Bonnaf\'e was able to establish the Mackey formula assuming either that $q$ is sufficiently large (with an explicit bound on $q$) or if all the quasi-simple components of $\bG$ are of type $\A$. In the latter case Lusztig's theory of cuspidal local systems \cite{lusztig:1984:intersection-cohomology-complexes} plays a prominent role in the proofs.
\end{pa}

\begin{pa}
Using the inductive approach mentioned above, together with computer calculations performed with {\sf CHEVIE} \cite{michel:2015:the-development-version-of-CHEVIE}, Bonnaf\'e--Michel \cite{bonnafe-michel:2011:mackey-formula} were able to show the Mackey formula holds assuming either that $q > 2$ or that $\bG$ has no quasi-simple components of type $\E_6$, $\E_7$ or $\E_8$. Our contribution to this problem is to observe that the following holds.
\end{pa}

\begin{thm}\label{thm:main}
Assume that $q=2$ and $\bG$ is such that $Z(\bG)$ is connected and $\bG$ has no quasi-simple component of type $\E_8$. Then the Mackey formula \cref{eq:mackey} holds for $(\bG,F)$.
\end{thm}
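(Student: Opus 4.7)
The plan is to adapt the method of Bonnaf\'e--Michel~\cite{bonnafe-michel:2011:mackey-formula}, which combines an inductive analysis of a minimal counterexample with explicit computer calculations in {\sf CHEVIE}. By their main theorem, \cref{eq:mackey} is already known unless $q=2$ and $\bG$ has a quasi-simple component of type $\E_6$, $\E_7$, or $\E_8$; since $\E_8$ components are excluded by hypothesis, it remains to treat the $\E_6$ and $\E_7$ cases under the additional assumption that $Z(\bG)$ is connected.

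First I would argue by induction on $\dim\bG$, supposing that $(\bG,F,\bL,\bP,\bM,\bQ)$ is a minimal counterexample. The reductions on minimal counterexamples developed by Bonnaf\'e, and recalled in~\cite{bonnafe-michel:2011:mackey-formula}, should then force $\bG$ to be (essentially) quasi-simple of type $\E_6$ or $\E_7$ with $q=2$. At this point one must verify that the connected-centre hypothesis survives each of the reductions used: passage to an $F$-stable Levi subgroup preserves connectedness of the centre, and the constructions by which one descends a counterexample to a smaller group, such as taking a product decomposition or factoring out a suitable central torus, can be arranged to stay inside the class of groups with connected centre and no $\E_8$ component.

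Next I would exploit the connectedness of $Z(\bG)$ on the character-theoretic side. It implies that centralisers of semisimple elements in the dual group are connected, so the rational Lusztig series of every $F$-stable Levi $\bL\leqslant\bG$ are parametrised without Clifford-theoretic ambiguity. By Bonnaf\'e's analysis the remaining Mackey-type vanishing is controlled by a list of cuspidal pairs in Levi subgroups of $\bG$, and this list shrinks considerably when the centre is connected. I expect this to bring the number of ``test data'' into a range where the direct {\sf CHEVIE} verification performed by Bonnaf\'e--Michel in other cases can actually be completed for $\E_6(2)$ and $\E_7(2)$.

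The main obstacle is the $\E_7(2)$ computation: even after pruning by the connected-centre hypothesis, the necessary manipulation of Lusztig families and Fourier matrices for $\E_7$ is expensive, and the analogous unrestricted calculation was not feasible for Bonnaf\'e--Michel. If a fully direct {\sf CHEVIE} verification proves impractical, the fallback would be to eliminate further entries from the required list by theoretical arguments, for instance by recognising that certain cuspidal pairs arise via Harish-Chandra induction from strictly smaller Levi subgroups, where the Mackey formula already holds by the inductive hypothesis.
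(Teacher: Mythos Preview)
Your proposal has the right overall shape (induction on $\dim\bG$, reduction to quasi-simple $\E_6$/$\E_7$, then analysis of cuspidal data), but it misses the two ideas that actually make the argument go through, and it misidentifies where the difficulty lies.

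First, the inductive step. You write that the reductions ``can be arranged to stay inside the class of groups with connected centre'', but this is precisely the point that needs work: if $Z(\bG)$ is connected and $s\in\bG^F$ is semisimple, then $Z(C_{\bG}^{\circ}(s))$ is \emph{not} connected in general, so one cannot apply the induction hypothesis directly to $C_{\bG}^{\circ}(s)$. The paper handles this with a structural lemma (\cref{lem:cent-comps}): when $p=2$ and the components of $\bG$ are of type $\A$, $\E_6$, $\E_7$, the quotient $C_{\bG}^{\circ}(s)/Z^{\circ}(C_{\bG}^{\circ}(s))$ factors bijectively as $\bH_1\times\bH_2$ with $Z(\bH_1)=\{1\}$ (so (H1) holds for $\bH_1$) and $\bH_2$ having only type $\A$/$\D$ components (so Mackey holds for $\bH_2$ by Bonnaf\'e--Michel). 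Combined with \cref{lem:inf-isom-uni} and the compatibility formulae, this is what lets the induction proceed and forces $Z(\bG)$ to be \emph{trivial}, hence $\bG$ adjoint.

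Second, the endgame. No new {\sf CHEVIE} computations are performed, and $\E_7(2)$ is not the obstacle. At $p=2$ the group of type $\E_7$ already has trivial centre, and Bonnaf\'e--Michel's Lemma~($\E_7$) shows directly that no $(\bM,\bQ)$ satisfies (P3)--(P6). The genuine issue in \cite{bonnafe-michel:2011:mackey-formula} was ${}^2\E_6^{\simc}(2)$, where a Levi of type $\A_2\A_2$ supports a cuspidal local system and satisfies all the required properties. The paper's point is that once one is reduced to the \emph{adjoint} group of type $\E_6$ at $p=2$, Lusztig's classification \cite[15.1]{lusztig:1984:intersection-cohomology-complexes} leaves only a Levi of type $\D_4$ as a candidate for (P3)+(P4); the $\A_2\A_2$ Levi disappears because its cuspidal local system exists only on the simply connected cover. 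The $\D_4$ Levi is then ruled out by the same argument as in \cite[2.2(f)]{bonnafe-michel:2011:mackey-formula}. So the proof is entirely theoretical, and your proposed computational route (and fallback) is not what is needed.
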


\begin{pa}
Our approach to proving \cref{thm:main} is exactly the same as that of \cite{bonnafe-michel:2011:mackey-formula}; namely we argue by induction on $\dim\bG$. As remarked in \cite[3.10]{bonnafe-michel:2011:mackey-formula} to show the Mackey formula holds for all tuples $(\bG,F,\bL,\bP,\bM,\bQ)$ it is sufficient to show the Mackey formula holds when $(\bG,F)$ is of type ${}^2\E_6^{\simc}(2)$ and $\bM$ is a Levi subgroup of type $\A_2\A_2$. Our observation is that by considering the adjoint group ${}^2\E_6^{\ad}(2)$ the problematic Levi subgroup of type $\A_2\A_2$ is circumvented.
\end{pa}

\begin{pa}
In the very first step of the proof of \cite[3.9]{bonnafe-michel:2011:mackey-formula} one encounters the following problem. If $Z(\bG)$ is connected then it is not necessarily the case that $Z(C_{\bG}^{\circ}(s))$ is connected for all semisimple elements $s \in \bG$. This means one cannot apply directly, to $C_{\bG}^{\circ}(s)$, any induction hypothesis which relies on the centre being connected. However, in the cases under consideration we have enough control over the structure of $C_{\bG}^{\circ}(s)$ to make use of the induction hypothesis, see \cref{lem:cent-comps}. Let us note now that our proof of \cref{thm:main} relies on all the previously established cases of the Mackey formula.
\end{pa}

\begin{pa}
Unfortunately we cannot push our argument through to the case where $\bG^F$ is $\E_8(2)$. Here there exists a semisimple element $s \in \bG^F$ such that $C_{\bG}^{\circ}(s)^F$ is a product ${}^2\E_6^{\simc}(2)\cdot{}^2\A_2^{\simc}(2)$. Thus we arrive back to the problem of dealing with the case of ${}^2\E_6^{\simc}(2)$. However, we can establish one general statement about \cref{eq:mackey} assuming $Z(\bG)$ is connected. For this we need the following notation. Let $\bG_{\uni} \subseteq \bG$ be the variety of all unipotent elements in $\bG$. We then denote by $\Class_{\uni}(\bG^F) \subseteq \Class(\bG^F)$ the space of unipotently supported class functions of $\bG^F$, i.e., those functions $f \in \Class(\bG^F)$ for which $f(g) \neq 0$ implies $g \in \bG_{\uni}^F$.
\end{pa}

\begin{thm}\label{thm:main-unip}
Assume $Z(\bG)$ is connected then the Mackey formula \cref{eq:mackey} holds on $\Class_{\uni}(\bM^F)$.
\end{thm}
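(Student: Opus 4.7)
The plan is to reduce \cref{thm:main-unip} to cases of \cref{eq:mackey} that are already established. By \cref{thm:main} together with the results of Bonnaf\'e--Michel, the Mackey formula holds in full whenever $Z(\bG)$ is connected, with the sole remaining open case being $q = 2$ and $\bG$ having a quasi-simple component of type $\E_8$. Thus only this residual case need be treated on $\Class_{\uni}(\bM^F)$. As a first step I would reduce to $\bG$ itself being quasi-simple of type $\E_8$: both sides of \cref{eq:mackey} are compatible with the decomposition of $\bG$ as the product of a central torus and its quasi-simple components, and on the non-$\E_8$ components the full Mackey formula already holds by \cref{thm:main}.

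The key feature of $\Class_{\uni}(\bM^F)$ to exploit next is that both $R^{\bG}_{\bM \subset \bQ}$ and ${}^*R^{\bG}_{\bL \subset \bP}$ preserve unipotent support. This follows from the Deligne--Lusztig character formula, since the value of $R^{\bG}_{\bM \subset \bQ}(f)$ at $g = su$ (Jordan decomposition) is an expression in the values of $f$ at elements whose semisimple part is $\bM^F$-conjugate to $s$; if $f$ vanishes off the unipotent set then $R^{\bG}_{\bM \subset \bQ}(f)$ is forced to vanish unless $s = 1$, and analogously for ${}^*R$. Consequently, for $f \in \Class_{\uni}(\bM^F)$ both sides of \cref{eq:mackey} yield unipotently supported class functions on $\bL^F$, and their equality can be tested pointwise on $\bL^F_{\uni}$.

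Applying the character formula once more at a unipotent $u \in \bL^F$, I would express both sides of \cref{eq:mackey} in terms of the Green functions $Q^{\bH}_{\bT'}$ attached to various maximal tori $\bT'$ in the Levi subgroups $\bH$ that appear in the Mackey decomposition. The corresponding Mackey-type identity among these Green function values is a known consequence of the case of \cref{eq:mackey} in which $\bM$ is replaced by a maximal torus, established by Deligne--Lusztig. The main obstacle is the combinatorial bookkeeping required to pass cleanly from \cref{eq:mackey} on class functions to this Green function identity: one must match the double coset set $\bL^F \setminus \mathcal{S}_{\bG}(\bL,\bM)^F / \bM^F$ against the indexing set arising when the character formula is applied in centralizers of unipotent elements, and absorb the resulting factors into a clean equality in $\Class_{\uni}(\bL^F)$. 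Once this matching is carried out the desired identity follows from the torus case and from \cref{thm:main}.
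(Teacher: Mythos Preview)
Your reduction to $q = 2$ with all quasi-simple components of type $\E_8$ is correct and matches the paper, as is the observation that Deligne--Lusztig induction and restriction preserve unipotent support. The gap is in the Green function step. The character formula for $R_{\bM \subset \bQ}^{\bG}(f)$ at a unipotent element $u$ does \emph{not} produce torus Green functions; for $f \in \Class_{\uni}(\bM^F)$ it reads
\begin{equation*}
(R_{\bM \subset \bQ}^{\bG} f)(u) = \frac{1}{|\bM^F|} \sum_{v \in \bM_{\uni}^F} Q_{\bM \subset \bQ}^{\bG}(u, v^{-1}) f(v),
\end{equation*}
where $Q_{\bM \subset \bQ}^{\bG}$ is the \emph{two-variable} Green function attached to the Levi $\bM$ itself, see \cite[12.1, 12.2]{digne-michel:1991:representations-of-finite-groups-of-lie-type}. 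The resulting Mackey-type identity among these two-variable Green functions is exactly the restriction of $\Delta_{\bL \subset \bP, \bM \subset \bQ}^{\bG}$ to $\Class_{\uni}(\bM^F)$, i.e.\ precisely the statement you are trying to prove; nothing has been gained. There is no general formula expressing $Q_{\bM \subset \bQ}^{\bG}$ through torus Green functions, and the ordinary Green functions of $\bM$ do not span $\Class_{\uni}(\bM^F)$, so one cannot expand $f$ over them and invoke the torus case of \cref{eq:mackey}. (Your reference to ``centralisers of unipotent elements'' also suggests a misreading of the character formula: the centralisers that enter are those of the semisimple part, which here is trivial.)

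The paper proceeds differently after the reduction to $\E_8$. Every proper $F$-stable Levi subgroup of such a $\bG$ has connected centre and no $\E_8$ component, so \cref{thm:main} yields the \emph{full} Mackey formula for all proper Levis. This is exactly the inductive input that drives the argument of \cite[3.9]{bonnafe-michel:2011:mackey-formula}: a hypothetical counterexample $(\bM,\bQ)$ must then satisfy the constraints (P1)--(P6) of \cite[2.1]{bonnafe-michel:2011:mackey-formula}, and that result establishes that no such pair exists in type $\E_8$.
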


\begin{acknowledgments}
This work was carried out during a visit of the author to the TU Kaiserslautern. The author would kindly like to thank the Fachbereich Mathematik for its hospitality and the DFG for financially supporting this visit through grant TRR-195. Finally, we thank Gunter Malle for useful discussions on this work.
\end{acknowledgments}

\section{Centralisers of Semisimple Elements}\label{sec:prelim}
\begin{pa}\label{pa:closed-F-stable-subgroup}
Throughout we assume that $\bG$ and $F : \bG \to \bG$ are as in \cref{pa:setup}. In what follows we will write $\bG$ as a product $\bG_1\cdots\bG_nZ(\bG)$ where $\bG_1,\dots,\bG_n$ are the quasi-simple components of $\bG$. With this notation in place we have the following.
\end{pa}

\begin{lem}\label{lem:bij-quo-derived}
Let $\bH = \bH_1\cdots\bH_nZ(\bG) \leqslant\bG$ be an $F$-stable subgroup of $\bG$ where $\bH_i \leqslant \bG_i$ is a closed connected reductive subgroup of $\bG_i$. If $\pi : \bH \to \bH/Z^{\circ}(\bH)$ denotes the natural quotient map and $Z(\bG) \leqslant Z^{\circ}(\bH)$ then we have a bijective morphism of varieties
\begin{align*}
\pi(\bH_1) \times \cdots \times \pi(\bH_n) &\to \bH/Z^{\circ}(\bH)\\
(h_1,\dots,h_n) &\mapsto h_1\cdots h_n
\end{align*}
which is defined over $\mathbb{F}_q$. Moreover, if $Z(\bH_i) \leqslant Z^{\circ}(\bH)$ then we have $\pi(\bH_i)$ has a trivial centre.
\end{lem}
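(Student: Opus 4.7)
The plan is to reduce the entire statement to two structural identifications: the commuting-components decomposition $\bH = \bH_1\cdots\bH_n Z(\bG)$, and the claim that $Z^\circ(\bH) = Z(\bG) \cdot Z^\circ(\bH_1)\cdots Z^\circ(\bH_n)$. First I would observe that since the distinct quasi-simple components $\bG_i$ commute pairwise, so do the $\bH_i$, so the multiplication map $\bH_1\times\cdots\times\bH_n \to \bH$ is a morphism of algebraic groups; composing with $\pi$ yields a morphism of varieties $\pi(\bH_1)\times\cdots\times\pi(\bH_n) \to \bH/Z^\circ(\bH)$. Surjectivity is immediate from the decomposition of $\bH$ together with the hypothesis $Z(\bG) \leqslant Z^\circ(\bH)$, and the $\mathbb{F}_q$-rationality follows because $F$ permutes the $\bH_i$ compatibly with its permutation of the quasi-simple components.

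The heart of the argument is the identification of $Z^\circ(\bH)$. The inclusion $Z(\bG) \cdot Z^\circ(\bH_1)\cdots Z^\circ(\bH_n) \leqslant Z^\circ(\bH)$ is automatic since the left-hand side is connected and central in $\bH$. For the reverse, I would first show that $Z(\bH) = Z(\bG)\cdot Z(\bH_1)\cdots Z(\bH_n)$: writing any $z \in Z(\bH)$ as $z_0 x_1\cdots x_n$, the requirement that conjugation by each $\bH_j$ fix $z$ forces $x_j \in Z(\bH_j)$. The quotient $Z(\bH)/(Z(\bG)\cdot Z^\circ(\bH_1)\cdots Z^\circ(\bH_n))$ is then a quotient of the finite group $\prod_i Z(\bH_i)/Z^\circ(\bH_i)$, so the connected subgroup $Z^\circ(\bH)$ must already lie in the displayed subgroup.

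Injectivity of the product map then reduces to a rearrangement: if $h_1\cdots h_n = z_0 w_1\cdots w_n$ with $z_0 \in Z(\bG)$ and $w_i \in Z^\circ(\bH_i)$, commutativity lets us write $(h_1 w_1^{-1})\cdots(h_n w_n^{-1}) = z_0$, and projecting this equality to the simple adjoint factor $\bG_i/Z(\bG_i)$ of $\bG/Z(\bG)$ kills every term except the image of $h_i w_i^{-1}$, placing $h_i w_i^{-1}$ in $\bH_i \cap Z(\bG_i) \leqslant Z(\bG) \leqslant Z^\circ(\bH)$; together with $w_i \in Z^\circ(\bH)$ this gives $h_i \in Z^\circ(\bH)$, so $\pi(h_i) = 1$. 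For the centre statement, if $\pi(h) \in Z(\pi(\bH_i))$ with $h \in \bH_i$ then $[h,\bH_i] \leqslant \bH_i \cap Z^\circ(\bH) = (\bH_i \cap Z(\bG))\cdot Z^\circ(\bH_i) \leqslant Z(\bH_i)$; since the commutator map is a morphism of varieties from the connected $\bH_i$ into $Z(\bH_i)$ sending $1$ to $1$, its image lies in $Z^\circ(\bH_i)$. Thus $h$ is central in the semisimple quotient $\bH_i/Z^\circ(\bH_i)$, forcing $h \in Z(\bH_i) \leqslant Z^\circ(\bH)$ by hypothesis, whence $\pi(h) = 1$.

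The main obstacle I anticipate is the delicate structural identification of $Z^\circ(\bH)$ and of $\bH_i \cap Z^\circ(\bH)$, since both groups mix contributions from several components that must be disentangled via projections onto the simple adjoint quotients $\bG_i/Z(\bG_i)$; once those identifications are in place, the bijectivity, $F$-equivariance, and triviality of $Z(\pi(\bH_i))$ all follow formally.
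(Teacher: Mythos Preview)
Your argument is correct, but it takes a considerably longer route than the paper's. The paper never computes $Z^{\circ}(\bH)$ explicitly. For injectivity it just uses that the quasi-simple components satisfy $\bG_i \cap \bG_j \leqslant Z(\bG) \leqslant Z^{\circ}(\bH)$ (and, implicitly, the analogous statement for $\bG_i$ against a product of the remaining components), so the images $\pi(\bH_i)$ intersect trivially. For the centre statement the paper simply observes that $\bH_i \cap Z^{\circ}(\bH) \leqslant \bH_i \cap Z(\bH) = Z(\bH_i)$, so under the hypothesis $Z(\bH_i) \leqslant Z^{\circ}(\bH)$ one gets a bijective homomorphism $\bH_i/Z(\bH_i) \to \pi(\bH_i)$; since $\bH_i/Z(\bH_i)$ is adjoint, its centre is trivial, and hence so is that of $\pi(\bH_i)$. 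Your approach, by contrast, pins down $Z^{\circ}(\bH) = Z(\bG)\cdot Z^{\circ}(\bH_1)\cdots Z^{\circ}(\bH_n)$ and $\bH_i \cap Z^{\circ}(\bH)$ explicitly and then argues via projections to the simple adjoint factors $\bG_i/Z(\bG_i)$; this yields strictly more structural information and is arguably more self-contained on the injectivity step, at the cost of several extra verifications. One small slip: you justify the inclusion $Z(\bG)\cdot Z^{\circ}(\bH_1)\cdots Z^{\circ}(\bH_n) \leqslant Z^{\circ}(\bH)$ by claiming the left-hand side is connected, but $Z(\bG)$ need not be connected here; the inclusion still holds, however, because each factor already lies in $Z^{\circ}(\bH)$ by the hypothesis $Z(\bG) \leqslant Z^{\circ}(\bH)$ and the fact that $Z^{\circ}(\bH_i) \leqslant Z^{\circ}(\bH)$.
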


\begin{proof}
Recall that if $i \neq j$ then we have $\bG_i \cap \bG_j \leqslant Z(\bG) \leqslant Z^{\circ}(\bH)$. Hence, as $\bH_i \cap \bH_j \leqslant \bG_i \cap \bG_j$ we have $\pi(\bH_i) \cap \pi(\bH_j) = \{1\}$ which establishes the bijective morphism. Now, let us consider the case where $Z(\bH_i) \leqslant Z^{\circ}(\bH)$. We know that $\bH_i/Z(\bH_i)\leqslant \bH/Z(\bH_i)$ has a trivial centre and we have a surjective homomorphism
\begin{equation*}
\bH/Z(\bH_i) \to \bH/Z^{\circ}(\bH)
\end{equation*}
which restricts to a bijective homomorphism $\bH_i/Z(\bH_i) \to \pi(\bH_i)$. Thus $\pi(\bH_i)$ also has a trivial centre.
\end{proof}

\begin{pa}
Our application of \cref{lem:bij-quo-derived} will be to the case where $\bH$ is the connected centraliser of a semisimple element of $\bG$. Specifically we will need the following.
\end{pa}

\begin{lem}\label{lem:cent-comps}
Assume that $p=2$ and $\bG$ is such that $Z(\bG)$ is connected and all the quasi-simple components of $\bG$ are of type $\A$, $\E_6$, or $\E_7$. Then if $s \in \bG^F$ is a semisimple element there exist $F$-stable closed connected reductive subgroups $\bH_1,\bH_2 \leqslant C_{\bG}^{\circ}(s)$ with the following properties:
\begin{enumerate}[label=(\alph*)]
	\item $\bH_1$ has a trivial centre and has no quasi-simple component of type $\E_8$,
	\item all the quasi-simple components of $\bH_2$ are of type $\A$ or $\D$,
	\item there exists a bijective homomorphism of algebraic groups
\begin{equation*}
\bH_1 \times \bH_2 \to C_{\bG}^{\circ}(s)/Z^{\circ}(C_{\bG}^{\circ}(s))
\end{equation*}
	which is defined over $\mathbb{F}_q$.
\end{enumerate}
\end{lem}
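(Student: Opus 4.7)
My plan is a two-step refinement of \cref{lem:bij-quo-derived}. Let $\bC := C_{\bG}^\circ(s)$. Writing $s = s_1 \cdots s_n z$ with $s_i \in \bG_i$ and $z \in Z(\bG)$ gives the decomposition $\bC = \bC_1 \cdots \bC_n \cdot Z(\bG)$ with $\bC_i := C_{\bG_i}^\circ(s_i) \leqslant \bG_i$ closed connected reductive. Since $Z(\bG)$ is connected and centralises $s$, we have $Z(\bG) \leqslant Z^\circ(\bC)$, so \cref{lem:bij-quo-derived} applies with $\bH = \bC$ and yields a bijective morphism $\prod_i \pi(\bC_i) \to \bC/Z^\circ(\bC)$ defined over $\mathbb{F}_q$, with $\pi : \bC \to \bC/Z^\circ(\bC)$ the quotient.

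Next, I would classify the quasi-simple components of each $\bC_i$ by applying Borel--de Siebenthal to the extended Dynkin diagram of the root system of $\bG_i$. For $\bG_i$ of type $\A$, only components of type $\A$ can appear; for $\bG_i$ of type $\E_6$, only components of type $\A$ or $\D$ (direct inspection of proper subsystems of $\E_6$); for $\bG_i$ of type $\E_7$, components can additionally be of type $\E_6$, arising precisely through the sub-root-systems $\E_6$ and $\E_6 + \A_1$ of $\E_7$. Thus an $\E_6$ quasi-simple component of $\bC$ can only come from an $\E_7$-factor of $\bG$.

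For each $i$ with $\bG_i$ of type $\E_7$ and $\bC_i$ carrying an $\E_6$ quasi-simple component $\bE_i$, decompose $\bC_i = \bE_i \cdot \bR_i$ where $\bR_i$ is either a $1$-dimensional central torus (case $\E_6$) or a type-$\A_1$ quasi-simple subgroup (case $\E_6 + \A_1$). Set $\bH_1 := \prod_i \bE_i$ (internal product inside $\bC$; take $\bE_i := 1$ whenever no $\E_6$ component is present) and $\bH_2$ to be the internal product of the $\bR_i$ together with the $\bC_j$ for those $j$ with $\bG_j$ of type $\A$ or $\E_6$. Grouping the indices by $F$-orbits of the quasi-simple components of $\bG$ ensures $F$-stability, and combining with \cref{lem:bij-quo-derived} yields the bijective morphism $\bH_1 \times \bH_2 \to \bC/Z^\circ(\bC)$.

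The main obstacle is verifying condition (a): that $\bH_1$ has trivial centre. The $\E_6$ quasi-simple component $\bE_i$ in an $\E_7$-centraliser might a priori be simply connected, carrying central $\mu_3$. The key step is a lattice computation, making crucial use of the hypothesis that $Z(\bG)$ is connected, to show that in every relevant configuration the $\mu_3$-torsion of $Z(\bE_i)$ is absorbed into $Z^\circ(\bC)$ (equivalently, that the character-lattice quotient $X^\ast(T_i)/\langle \Phi_{\E_6}\rangle$ has no $3$-torsion, where $T_i$ is a maximal torus of $\bG_i$). Invoking the ``trivial centre'' clause of \cref{lem:bij-quo-derived} then shows that $\pi(\bE_i)$ has trivial centre, from which one extracts a concrete subgroup inside $\bC$ with trivial centre by splitting the resulting central extension. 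Carrying this out consistently, $F$-stably, and in the two subsystem cases $\E_6$ and $\E_6 + \A_1$ is the main technical work.
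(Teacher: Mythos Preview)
Your overall strategy---reduce to the factors $\bC_i := C_{\bG_i}^{\circ}(s_i)$ and invoke \cref{lem:bij-quo-derived}---is the same as the paper's, but your case analysis has a genuine gap and misses the key simplification.

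The gap is that you only inspect \emph{proper} subsystems of $\E_6$ and $\E_7$, so you omit the case $s_i \in Z(\bG_i)$, i.e.\ $\bC_i = \bG_i$. When $\bG_i$ is of type $\E_6$ or $\E_7$ this produces a quasi-simple component of type $\E_6$ or $\E_7$ in $\bC$; your construction puts every $\bC_j$ with $\bG_j$ of type $\E_6$ into $\bH_2$, which then violates condition~(b), and makes no provision at all for an $\E_7$ component. The paper's trichotomy is: either all quasi-simple components of $\bC_i$ are of type $\A$ or $\D$; or $\bG_i$ is of type $\E_7$ and $\bC_i$ is a Levi of type $\E_6$; or $\bC_i = \bG_i$. (Incidentally, your case ``$\E_6+\A_1$ in $\E_7$'' does not occur: in the affine $\E_7$ diagram the unique $\E_6$ subdiagram is the standard Levi $\{\alpha_1,\dots,\alpha_6\}$, and both remaining nodes are adjacent to it; equivalently, no root of $\E_7$ is orthogonal to the span of $\Phi_{\E_6}$.)

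The ``main technical work'' you anticipate for condition~(a) is also unnecessary, because you never exploit the hypothesis $p=2$. In the Levi case this is exactly what is needed: the centre of any quasi-simple group of type $\E_7$ is $\mu_2$, hence trivial when $p=2$, so $Z(\bG_i)=\{1\}$ and the Levi $\bC_i$ automatically has connected centre, giving $Z(\bC_i)\leqslant Z^{\circ}(\bC)$. In the case $\bC_i=\bG_i$ one instead uses $Z(\bC_i)=Z(\bG_i)\leqslant Z(\bG)=Z^{\circ}(\bG)\leqslant Z^{\circ}(\bC)$, which is where the connectedness of $Z(\bG)$ actually enters. In both cases the ``trivial centre'' clause of \cref{lem:bij-quo-derived} then applies directly to $\pi(\bC_i)$---no lattice computation, no splitting of central extensions.
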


\begin{proof}
As above we write $\bG$ as a product $\bG_1\cdots\bG_nZ(\bG)$ where the $\bG_i$ are the quasi-simple components of $\bG$. Similarly we may write $s$ as a product $s_1\cdots s_nz$ where $s_i \in \bG_i$ and $z \in Z(\bG)$. We then have $C_{\bG}^{\circ}(s) = C_{\bG_1}^{\circ}(s_1)\cdots C_{\bG_n}^{\circ}(s_n)Z(\bG)$, see \cite[2.2]{bonnafe:2005:quasi-isolated} for instance. By assumption each $\bG_i$ is of type $\A$, $\E_6$, or $\E_7$ which implies one of the following holds:
\begin{itemize}
	\item all the quasi-simple components of $C_{\bG_i}^{\circ}(s_i)$ are of type $\A$ or $\D$,
	\item $\bG_i$ is of type $\E_7$ and $C_{\bG_i}^{\circ}(s_i)$ is a Levi subgroup of type $\E_6$,
	\item $\bG_i = C_{\bG_i}^{\circ}(s_i)$ is of type $\E_6$ or $\E_7$.
\end{itemize}
As $p=2$ we have in the second case that $Z(\bG_i) = \{1\}$ which implies that $Z(C_{\bG_i}^{\circ}(s_i))$ is connected because $C_{\bG_i}^{\circ}(s_i)$ is a Levi subgroup of $\bG_i$. In particular, we have $Z(C_{\bG_i}^{\circ}(s_i)) \leqslant Z^{\circ}(C_{\bG}^{\circ}(s))$. In the third case we have $Z(C_{\bG_i}^{\circ}(s_i)) = Z(\bG_i) \leqslant Z(\bG) \leqslant Z^{\circ}(C_{\bG}^{\circ}(s))$ because, by assumption, we have $Z(\bG)$ is connected. The statement now follows from \cref{lem:bij-quo-derived}.
\end{proof}

\section{Around the Mackey Formula}\label{sec:con-centre-p=2}
\begin{pa}
Assume we are given a tuple $(\bG,F,\bL,\bP,\bM,\bQ)$ as in \cref{pa:setup} then we set
\begin{equation*}
\Delta_{\bL \subset \bP,\bM \subset \bQ}^{\bG} = {}^*R_{\bL \subset \bP}^{\bG}\circ R_{\bM \subset \bQ}^{\bG} - \sum_{g \in \bL^F \setminus \mathcal{S}_{\bG}(\bL,\bM)^F/\bM^F} R_{\bL \cap {}^g\bM \subset \bL \cap {}^g\bQ}^{\bL}\circ {}^*R_{\bL\cap {}^g\bM \subset \bP \cap {}^g\bM}^{{}^g\bM} \circ (\ad g)_{\bM}.
\end{equation*}
The Mackey formula \cref{eq:mackey} is therefore equivalent to the statement $\Delta_{\bL \subset \bP,\bM \subset \bQ}^{\bG} = 0$. Note that $\Delta_{\bL \subset \bP,\bM \subset \bQ}^{\bG}$ is a linear map $\Class(\bM^F) \to \Class(\bL^F)$. In what follows we will say that the Mackey formula holds for $(\bG,F)$, or for short that it holds for $\bG$, if $\Delta_{\bL \subset \bP,\bM \subset \bQ}^{\bG} = 0$ for all possible quadruples $(\bL,\bP,\bM,\bQ)$.
\end{pa}

\begin{pa}\label{pa:inf-compat}
Recall that a homomorphism $\iota : \bG \to \widetilde{\bG}$ is said to be isotypic if the following hold: $\bG$ and $\widetilde{\bG}$ are connected reductive algebraic groups, the kernel $\Ker(\iota)$ is central in $\bG$ and the image $\Image(\iota)$ contains the derived subgroup of $\widetilde{\bG}$. If $\iota$ is defined over $\mathbb{F}_q$ then this restricts to a homomorphism $\iota : \bG^F \to \widetilde{\bG}^F$ and we have a corresponding restriction map $\Res_{\bG^F}^{\widetilde{\bG}^F} : \Class(\widetilde{\bG}^F) \to \Class(\bG^F)$ defined by $\Res_{\bG^F}^{\widetilde{\bG}^F}(f) = f\circ\iota$. If $\bK \leqslant \bG$ is a closed subgroup of $\bG$ then we denote by $\widetilde{\bK}$ the subgroup $\iota(\bK)Z(\widetilde{\bG}) \leqslant \widetilde{\bG}$. With this notation we have by \cite[3.7]{bonnafe-michel:2011:mackey-formula} that
\begin{equation}\label{eq:res-formula}
\Res_{\bL^F}^{\widetilde{\bL}^F} \circ \Delta_{\widetilde{\bL} \subset \widetilde{\bP},\widetilde{\bM} \subset \widetilde{\bQ}}^{\widetilde{\bG}} = \Delta_{\bL \subset \bP,\bM \subset \bQ}^{\bG}\circ\Res_{\bM^F}^{\widetilde{\bM}^F}.
\end{equation}
The following is an easy consequence of \cref{eq:res-formula}.
\end{pa}

\begin{lem}\label{lem:bij-morph}
If $\iota : \bG \to \widetilde{\bG}$ is a bijective morphism of algebraic groups defined over $\mathbb{F}_q$ then the Mackey formula holds for $(\bG,F)$ if and only if it holds for $(\widetilde{\bG},F)$.
\end{lem}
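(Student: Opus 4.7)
The plan is to derive the lemma formally from equation \eqref{eq:res-formula}. First I would observe that any bijective morphism $\iota$ of connected reductive groups is isotypic in the sense of \cref{pa:inf-compat}, since $\Ker(\iota) = \{1\}$ is (trivially) central and $\Image(\iota) = \widetilde{\bG}$ contains the derived subgroup. Because $\iota$ is defined over $\mathbb{F}_q$ it commutes with $F$, so it restricts to a bijection of finite groups $\iota : \bG^F \to \widetilde{\bG}^F$, and consequently $\Res_{\bG^F}^{\widetilde{\bG}^F} : \Class(\widetilde{\bG}^F) \to \Class(\bG^F)$ is a linear isomorphism.

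Next I would verify that the same holds at the Levi level. Any Levi subgroup $\bL$ of a parabolic of $\bG$ contains $Z(\bG)$, and a short commutator argument using only that $\iota$ is an injective group homomorphism shows $\iota(Z(\bG)) = Z(\widetilde{\bG})$. Consequently $\widetilde{\bL} = \iota(\bL)Z(\widetilde{\bG}) = \iota(\bL)$, the map $\iota$ restricts to a bijective morphism $\bL \to \widetilde{\bL}$ defined over $\mathbb{F}_q$, and $\Res_{\bL^F}^{\widetilde{\bL}^F}$ is again a linear isomorphism; the analogous statement holds for $\bM$. Moreover, since $\iota$ is bijective and preserves the structure of algebraic groups, $\bP \mapsto \iota(\bP)$ is a bijection between parabolic subgroups of $\bG$ and of $\widetilde{\bG}$ compatible with Levi complements, so quadruples $(\bL,\bP,\bM,\bQ)$ in $\bG$ correspond bijectively to quadruples $(\widetilde{\bL},\widetilde{\bP},\widetilde{\bM},\widetilde{\bQ})$ in $\widetilde{\bG}$.

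With these observations in place, both implications follow immediately from \eqref{eq:res-formula}. If the Mackey formula holds for $\widetilde{\bG}$ then for every quadruple in $\bG$ the left-hand side of \eqref{eq:res-formula} vanishes, and precomposing with the inverse of the isomorphism $\Res_{\bM^F}^{\widetilde{\bM}^F}$ yields $\Delta_{\bL \subset \bP,\bM \subset \bQ}^{\bG} = 0$. Conversely, if the Mackey formula holds for $\bG$ then for any quadruple in $\widetilde{\bG}$, realised as $(\widetilde{\bL},\widetilde{\bP},\widetilde{\bM},\widetilde{\bQ})$ via the correspondence above, the right-hand side of \eqref{eq:res-formula} vanishes, and postcomposing with the inverse of $\Res_{\bL^F}^{\widetilde{\bL}^F}$ gives $\Delta_{\widetilde{\bL} \subset \widetilde{\bP},\widetilde{\bM} \subset \widetilde{\bQ}}^{\widetilde{\bG}} = 0$. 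The argument is entirely formal once \eqref{eq:res-formula} is available; the only point requiring care is the verification that parabolic and Levi subgroups correspond bijectively under $\iota$, so that every quadruple in $\widetilde{\bG}$ really does arise from one in $\bG$.
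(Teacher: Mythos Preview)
Your proposal is correct and follows exactly the approach the paper indicates: the paper does not give a separate proof of this lemma but simply declares it ``an easy consequence of \eqref{eq:res-formula}'', and you have spelled out precisely those easy details, including the verification that $\iota$ is isotypic, that the restriction maps are isomorphisms, and that quadruples $(\bL,\bP,\bM,\bQ)$ in $\bG$ correspond bijectively to quadruples in $\widetilde{\bG}$.
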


\begin{pa}
Now assume $s \in \bG^F$ is a semisimple element then for any class function $f \in \Class(\bG^F)$ we define a function $d_s^{\bG}(f) : C_{\bG}^{\circ}(s)^F \to \Ql$ by setting
\begin{equation*}
d_s^{\bG}(f)(g) = \begin{cases}
f(sg) &\text{if $g$ is unipotent},\\
0 &\text{otherwise}.
\end{cases}
\end{equation*}
Note that $d_s^{\bG}(f) \in \Class(C_{\bG}^{\circ}(s)^F)$ so we have defined a $\Ql$-linear map $d_s^{\bG} : \Class(\bG^F) \to \Class_{\uni}(C_{\bG}^{\circ}(s)^F)$. In particular, if $z \in Z(\bG)^F$ then we obtain a $\Ql$-linear map $d_z^{\bG} : \Class(\bG^F) \to \Class_{\uni}(\bG^F)$. Now, if $s \in \bL^F$ is a semisimple element then by \cite[3.5]{bonnafe-michel:2011:mackey-formula} we have 
\begin{equation}\label{eq:d_s-formula}
d_s^{\bL} \circ \Delta_{\bL \subset \bP,\bM \subset \bQ}^{\bG} = \sum_{\substack{g \in \bG^F \\ s \in {}^g\bM}} \frac{|C_{{}^g\bM}^{\circ}(s)^F|}{|\bM^F||C_{\bG}^{\circ}(s)^F|}\Delta_{C_{\bL}^{\circ}(s)\subset C_{\bP}^{\circ}(s), C_{{}^g\bM}^{\circ}(s) \subset C_{{}^g\bQ}^{\circ}(s)}^{C_{\bG}^{\circ}(s)} \circ d_s^{{}^g\bM}\circ (\ad g)_{\bM}.
\end{equation}
Moreover, if $s \in Z(\bG)^F \leqslant \bL^F\cap\bM^F$ it follows that
\begin{equation}\label{eq:d_s-centre-formula}
d_s^{\bL} \circ \Delta_{\bL \subset \bP,\bM \subset \bQ}^{\bG} = \Delta_{\bL \subset \bP,\bM \subset \bQ}^{\bG} \circ d_s^{\bM},
\end{equation}
see \cite[3.6]{bonnafe-michel:2011:mackey-formula}.
\end{pa}

\begin{lem}\label{lem:inf-isom-uni}
Assume $\iota : \bG \to \widetilde{\bG}$ is a surjective isotypic morphism such that $\Ker(\iota) \leqslant Z^{\circ}(\bG)$ then the map $d_1^{\bG}\circ\Res_{\bG^F}^{\widetilde{\bG}^F} : \Class(\widetilde{\bG}^F) \to \Class_{\uni}(\bG^F)$ restricts to an isomorphism $\Class_{\uni}(\widetilde{\bG}^F) \to \Class_{\uni}(\bG^F)$.
\end{lem}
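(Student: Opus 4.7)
The plan is to reduce the statement to the combinatorial assertion that $\iota$ induces a bijection between the $\bG^F$-conjugacy classes in $\bG_{\uni}^F$ and the $\widetilde{\bG}^F$-conjugacy classes in $\widetilde{\bG}_{\uni}^F$. Granted this, both $\Class_{\uni}(\bG^F)$ and $\Class_{\uni}(\widetilde{\bG}^F)$ admit bases consisting of the characteristic functions of these classes, and it follows directly from the definitions that $d_1^{\bG}\circ\Res_{\bG^F}^{\widetilde{\bG}^F}$ sends the characteristic function of a rational unipotent class $\tilde C \subseteq \widetilde{\bG}_{\uni}^F$ to that of the corresponding class $C \subseteq \bG_{\uni}^F$, giving the desired isomorphism.

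First I would establish the set-theoretic bijection $\iota : \bG_{\uni} \to \widetilde{\bG}_{\uni}$. For surjectivity, given $\tilde u \in \widetilde{\bG}_{\uni}$ one picks any preimage $g \in \bG$, and the Jordan decomposition $g = g_s g_u$ maps under $\iota$ to that of $\tilde u$, forcing $\iota(g_u) = \tilde u$. For injectivity, if $\iota(u) = \iota(u')$ with $u,u' \in \bG_{\uni}$ then $z := u(u')^{-1}$ lies in $\Ker(\iota) \leqslant Z^\circ(\bG)$, which is a torus; thus $z$ is central and semisimple, and $u = z u'$ is then the Jordan decomposition of a unipotent element, so $z = 1$. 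Restricting to $F$-fixed points is automatic: the unique unipotent lift $u$ of an $F$-stable $\tilde u$ satisfies $F(u) = u$ because $F(u)$ is another unipotent lift of $\tilde u$.

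To upgrade this to a bijection of rational conjugacy classes I would analyse component groups of centralisers. Injectivity of $\iota$ on unipotent elements gives $C_{\widetilde{\bG}}(\iota(u)) = \iota(C_{\bG}(u))$ (any lift $g$ of an element of the left-hand side satisfies $g u g^{-1} = u$, both sides being unipotent with the same image under $\iota$), and hence $C_{\widetilde{\bG}}(\iota(u)) \cong C_{\bG}(u)/\Ker(\iota)$. The crucial point is that $\Ker(\iota) \leqslant Z^\circ(\bG) \leqslant C_{\bG}^\circ(u)$ is connected, so the same relation holds for identity components and we obtain an $F$-equivariant isomorphism $C_{\bG}(u)/C_{\bG}^\circ(u) \cong C_{\widetilde{\bG}}(\iota(u))/C_{\widetilde{\bG}}^\circ(\iota(u))$ of component groups. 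Together with the standard parametrisation of $\bG^F$-orbits inside an $F$-stable geometric unipotent class by $F$-twisted conjugacy classes in the component group (and the analogue for $\widetilde{\bG}$), this produces the required bijection of rational conjugacy classes.

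The main obstacle I anticipate is precisely this last step: because $\iota : \bG^F \to \widetilde{\bG}^F$ need not be surjective, $\widetilde{\bG}^F$-conjugacy of $\iota(u)$ and $\iota(u')$ does not at first sight imply $\bG^F$-conjugacy of $u$ and $u'$. The hypothesis $\Ker(\iota) \leqslant Z^\circ(\bG)$ is exactly what guarantees that the component groups coincide and hence removes this obstruction; a merely central (but possibly disconnected) kernel would not suffice.
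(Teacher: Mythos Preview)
Your proposal is correct and follows essentially the same approach as the paper: both arguments reduce the lemma to the assertion that $\iota$ induces a bijection between $\bG^F$-classes in $\bG_{\uni}^F$ and $\widetilde{\bG}^F$-classes in $\widetilde{\bG}_{\uni}^F$. The only difference is presentational: the paper simply asserts the bijection $\bG_{\uni}^F \to \widetilde{\bG}_{\uni}^F$ and then cites \cite[3.8]{bonnafe-michel:2011:mackey-formula} for the matching of rational conjugacy classes, whereas you supply the argument directly via the $F$-equivariant isomorphism of component groups (which is exactly what that reference proves).
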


\begin{proof}
Note that $\iota$ restricts to a bijection $\iota : \bG_{\uni}^F \to \widetilde{\bG}_{\uni}^F$. We will denote by $\iota^{-1} : \widetilde{\bG}_{\uni}^F \to \bG_{\uni}^F$ the inverse of this map. Now, if $f \in \Class_{\uni}(\bG^F)$ then we define $\tilde{f} : \widetilde{\bG}^F \to \Ql$ by setting
\begin{equation*}
\tilde{f}(g) = \begin{cases}
f(\iota^{-1}(g)) &\text{if }g \in \widetilde{\bG}_{\uni}^F\\
0 &\text{otherwise}.
\end{cases}
\end{equation*}
The proof of \cite[3.8]{bonnafe-michel:2011:mackey-formula} shows that $u,v \in \bG_{\uni}^F$ are $\bG^F$-conjugate if and only if $\iota(u),\iota(v) \in \widetilde{\bG}_{\uni}^F$ are $\widetilde{\bG}^F$-conjugate because $\Ker(\iota) \leqslant Z^{\circ}(\bG)$ and $\iota$ is surjective. This implies $\tilde{f} \in \Class(\widetilde{\bG}^F)$ so we're done.
\end{proof}

\section{Proof of Main Results}
\begin{proof}[of \cref{thm:main}]
We will denote by $\preceq$ the lexicographic order on $\mathbb{N} \times \mathbb{N}$. With this we assume that $(\bG,F,\bL,\bP,\bM,\bQ)$ is a tuple such that the following hold:
\begin{enumerate}[label=(H\arabic*), leftmargin=1.4cm]
	\item $Z(\bG)$ is connected and $\bG$ has no quasi-simple component of type $\E_8$,
	\item $\Delta_{\bL\subset\bP,\bM\subset\bQ}^{\bG} \neq 0$,
	\item $(\dim\bG,\dim\bL+\dim\bM)$ is minimal, with respect to $\preceq$, amongst all the tuples satisfying (H1) and (H2).
\end{enumerate}
Arguing on the minimality of $(\dim\bG,\dim\bL+\dim\bM)$ we aim to show that such a tuple cannot exist. We follow precisely the argument used in the proof of \cite[3.9]{bonnafe-michel:2011:mackey-formula}.

As $p=2$ and (H1) holds there exist $F$-stable closed connected reductive subgroups $\bG_1,\bG_2\leqslant\bG$ such that the following hold:
\begin{itemize}
	\item all the quasi-simple components of $\bG_1$ are of type $\A$, $\E_6$, or $\E_7$,
	\item all the quasi-simple components of $\bG_2$ are of type $\B$, $\C$, $\D$, $\F_4$, or $\G_2$,
	\item the product map $\bG_1\times\bG_2 \to \bG$ is a bijective morphism of algebraic groups defined over $\mathbb{F}_q$.
\end{itemize}
As (H2) holds for $\bG$ we have by \cref{lem:bij-morph} that the same must be true of the direct product $\bG_1 \times \bG_2$. Now, by \cite[3.9]{bonnafe-michel:2011:mackey-formula}, the Mackey formula holds for $\bG_2$ so as Deligne--Lusztig induction is compatible with respect to direct products we can assume that the Mackey formula fails for $\bG_1$. Applying (H3) and \cref{lem:bij-morph} we may thus assume that all the quasi-simple components of $\bG$ are of type $\A$, $\E_6$, or $\E_7$.

Let us denote by $\mu \in \Class(\bM^F)$ a class function such that $\Delta_{\bL \subset \bP,\bM \subset \bQ}^{\bG}(\mu) \neq 0$. By \cite[3.2]{bonnafe-michel:2011:mackey-formula} there must exist a semisimple element $s \in \bL^F$ such that $d_s^{\bL}(\Delta_{\bL \subset \bP,\bM \subset \bQ}^{\bG}(\mu)) \neq 0$. Applying \cref{eq:d_s-formula} there thus exists an element $g \in \bG^F$ such that
\begin{equation*}
\Delta_{C_{\bL}^{\circ}(s)\subset C_{\bP}^{\circ}(s), C_{{}^g\bM}^{\circ}(s) \subset C_{{}^g\bQ}^{\circ}(s)}^{C_{\bG}^{\circ}(s)}({}^g\mu) \neq 0.
\end{equation*}
We set $\bM' = {}^g\bM$, $\bQ' = {}^g\bQ$ and $\lambda = d_s^{\bM'}({}^g\mu) \in \Class_{\uni}(C_{\bM'}^{\circ}(s)^F)$.

If $\bK \leqslant \bG$ is a closed subgroup of $\bG$ then we denote by $\bK_s$ the subgroup $C_{\bK}^{\circ}(s) \leqslant \bG$ and by $\bar{\bK}_s$ the image of $\bK_s$ under the natural quotient map $C_{\bG}^{\circ}(s) \to C_{\bG}^{\circ}(s)/Z^{\circ}(C_{\bG}^{\circ}(s))$. Note this quotient map is a surjective isotypic morphism with connected kernel. Therefore, by \cref{lem:inf-isom-uni}, there exists a unique unipotently supported class function $\bar{\lambda} \in \Class_{\uni}(\bar{\bG}_s^F)$ such that $\lambda = d_1^{\bG}(\Res_{\bG_s^F}^{\bar{\bG}_s^F}(\bar{\lambda}))$. Applying \cref{eq:res-formula,eq:d_s-centre-formula} we see that
\begin{equation*}
d_1^{\bM_s'}(\Res_{\bM_s'^F}^{\bar{\bM}_s'^F}(\Delta_{\bar{\bL}_s \subset \bar{\bP}_s, \bar{\bM}_s' \subset \bar{\bQ}_s'}^{\bar{\bG}_s}(\bar{\lambda})) = \Delta_{\bL_s \subset \bP_s,\bM_s' \subset \bQ_s'}^{\bG_s}(\lambda) \neq 0
\end{equation*}
so $\Delta_{\bar{\bL}_s \subset \bar{\bP}_s, \bar{\bM}_s' \subset \bar{\bQ}_s'}^{\bar{\bG}_s}(\bar{\lambda}) \neq 0$.

Let us now assume that $\bH_1,\bH_2 \leqslant \bar{\bG}_s$ are closed subgroups as in \cref{lem:cent-comps}. By \cite[3.9]{bonnafe-michel:2011:mackey-formula} we have the Mackey formula holds for $\bH_2$ so, arguing as above, we may assume the Mackey formula fails for $\bH_1$. Now, we have $\dim\bH_1 \leqslant \dim \bG_s \leqslant \dim \bG$ and $\bH_1$ satisfies (H1). Thus by (H3) we can assume these inequalities are equalities. In particular, this implies that $\bG_s = \bG$ and the quotient map $\bG \to \bG/Z^{\circ}(\bG) = \bG/Z(\bG)$ is bijective. Hence, we can assume that $Z(\bG)$ is trivial and $\mu \in \Class(\bM^F)$ is unipotently supported.

As $Z(\bG)$ is trivial we have by \cref{lem:bij-morph} that it is sufficient to consider the case where $\bG$ is adjoint so that $\bG$ is a direct product of its quasi-simple components. Moreover, by compatibility with direct products we can assume that $F$ cyclically permutes the quasi-simple components of $\bG$. Finally we can assume that either all the quasi-simple components are of type $\E_6$ or they are all of type $\E_7$ because the Mackey formula holds if they are of type $\A$ by \cite[3.9]{bonnafe-michel:2011:mackey-formula}.

Now let $(\bG^{\star},F^{\star})$ be a pair dual to $(\bG,F)$ and let $\bM^{\star} \leqslant \bG^{\star}$ be a Levi subgroup dual to $\bM \leqslant \bG$. We note that $\bG^{\star}$ is simply connected as $\bG$ is adjoint. Arguing exactly as in the proof of \cite[3.9]{bonnafe-michel:2011:mackey-formula} we may assume that the following properties hold:
\begin{enumerate}[label=(P\arabic*),start=3,leftmargin=1.4cm]
	\item $\bM$ is not a maximal torus and $\bM \neq \bG$,
	\item there exists an $F$-stable unipotent class of $\bM$ which supports an $F$-stable cuspidal local system, in the sense of \cite[2.4]{lusztig:1984:intersection-cohomology-complexes},
	\item $\bQ$ is not contained in an $F$-stable proper parabolic subgroup of $\bG$,
	\item there exists a semisimple element $s \in \bM^{\star F^{\star}}$ which is quasi-isolated in both $\bM^{\star}$ and $\bG^{\star}$ such that $sz$ is $\bG^{\star F^{\star}}$-conjugate to $s$ for every $z \in Z(\bM^{\star})^{F^{\star}}$.
\end{enumerate}
Indeed, (P3) follows immediately from the fact that the Mackey formula holds if either $\bL$ or $\bM$ is a maximal torus. Moreover, (P5) follows from the formula in \cite[3.4]{bonnafe-michel:2011:mackey-formula} together with the fact that the Mackey formula holds if both $\bP$ and $\bQ$ are $F$-stable. The remaining properties (P4) and (P6) are established by using the fact that (H1) holds for all proper Levi subgroups of $\bG$. In particular, the Mackey formula holds for all proper Levi subgroups of $\bG$.

It is already established in \cite[Lemma ($\E7$)]{bonnafe-michel:2011:mackey-formula} that if the quasi-simple components of $\bG$ are of type $\E_7$ then there is no pair $(\bM,\bQ)$ satisfying (P3) to (P6). Hence we can assume that all the quasi-simple components are of type $\E_6$. As $\bG$ is adjoint and $p=2$ the only possible choice for $\bM$ satisfying (P3) and (P4) is a Levi subgroup of type $\D_4$, see \cite[15.1]{lusztig:1984:intersection-cohomology-complexes}. However the exact same argument used in the proof of \cite[2.2(f)]{bonnafe-michel:2011:mackey-formula} shows that no such Levi subgroup can satisfy both (P5) and (P6). This completes the proof.
\end{proof}

\begin{proof}[of \cref{thm:main-unip}]
Assume for a contradiction that $\mu \in \Class_{\uni}(\bM^F)$ is a unipotently supported class function satisfying $\Delta_{\bL\subset\bP,\bM\subset\bQ}^{\bG}(\mu) \neq 0$. By \cite[3.9]{bonnafe-michel:2011:mackey-formula} we can assume that $q = 2$. By \cref{thm:main,lem:bij-morph} and compatibility with direct products we can assume that all the quasi-simple components of $\bG$ are of type $\E_8$ and that $F$ cyclically permutes these quasi-simple components. Note that $\bG$ is necessarily semisimple and simply connected.

We note that any proper $F$-stable Levi subgroup of $\bG$ has connected centre and has no quasi-simple component of type $\E_8$. Thus by \cref{thm:main} the Mackey formula holds for any proper $F$-stable Levi subgroup. With this we may argue as above, and exactly as in the proof of \cite[3.9]{bonnafe-michel:2011:mackey-formula}, that the pair $(\bM,\bQ)$ satisfies the properties (P1) to (P6) of \cite[2.1]{bonnafe-michel:2011:mackey-formula}. However, \cite[2.1]{bonnafe-michel:2011:mackey-formula} establishes precisely that there is no such pair $(\bM,\bQ)$ satisfying these properties, so we must have $\Delta_{\bL\subset\bP,\bM\subset\bQ}^{\bG}(\mu) = 0$.
\end{proof}

\begingroup
\setstretch{0.96}
\renewcommand*{\bibfont}{\small}
\printbibliography
\endgroup
\end{document}